\newtheorem{theorem}{Theorem}[section]
\newtheorem{lemma}[theorem]{Lemma}
\newtheorem{proposition}[theorem]{Proposition}
\newtheorem{corollary}[theorem]{Corollary}
\theoremstyle{definition}
\newtheorem{remark}[theorem]{Remark}
\newtheorem{case-}{Case}
\newtheorem{case.}{Case}
\newtheorem{example}[theorem]{Example}
\newtheorem{definition}[theorem]{Definition}
\DeclareMathOperator{\Tor}{Tor}
\DeclareMathOperator{\reg}{reg}
\DeclareMathOperator{\lcm}{lcm}
\DeclareMathOperator{\pd}{pd}
\DeclareMathOperator{\facets}{Facets}
\DeclareMathOperator{\taylor}{Taylor}
\DeclareMathOperator{\m}{\mathbf{m}}
\title{Betti numbers of monomial ideals via facet covers}
\author{Nursel Erey\footnote{E-mail: nurselerey@gmail.com} \and Sara Faridi\footnote{E-mail: faridi@mathstat.dal.ca (Research supported by an NSERC Discovery Grant)}}
    \date{\footnotesize Department of Mathematics and Statistics\\ Dalhousie University \\ Halifax, Nova Scotia, Canada B3H 3J5 \\[\baselineskip] }
\begin{document}
\maketitle


\begin{abstract}
We give a combinatorial condition that ensures a monomial ideal has
a nonzero Betti number in a given multidegree. As a result, we
combinatorially characterize all multigraded Betti numbers, projective
dimension and regularity of facet ideals of simplicial forests.  Our
condition is expressed in terms of minimal facet covers of simplicial
complexes.

\end{abstract}

\section{Introduction}

We begin by introducing the problem.  For a monomial ideal
$I=(m_1,\ldots ,m_q)$ one way to find Betti numbers of its minimal free
resolution is by calculating simplicial homology of subcomplexes of
its Taylor complex $\taylor(I)$, which is a simplex whose vertices are
labeled with the monomials $m_1, \ldots ,m_q$ and whose each face is
labeled with the $\lcm$ of the monomials labeling the vertices of that
face. As an example consider $I=(xy,yz,xz)$. In this case $\taylor(I)$
is the labeled simplex on the left in Figure~\ref{f:Taylor}.

\begin{figure}[htp]
\centering
\begin{tikzpicture}
[scale=1.00,vertices/.style={draw, fill=black, circle, inner sep=0.5pt}]

\filldraw[fill=black!25, draw=black] (-1.1,0) -- (0,1.1) -- (1.1, 0) -- cycle; 

\node[vertices, label=left:{$xy$}] (a) at (-1.1,0) {};
\node[vertices, label=left:{$yz$}] (b) at (0,1.1) {};
\node[vertices, label=right:{$xz$}] (c) at (1.1,0) {};
\node[label=below:{$xyz$}](e) at (0,.8){}; 
\node[label=left:{$xyz$}](f) at (-.5,.5){}; 
\node[label=right:{$xyz$}](g) at (.5,.5){}; 
\node[label=below:{$xyz$}](h) at (0,.1){}; 
\node[vertices, label=left:{$xy$}] (i) at (3.9,0) {};
\node[vertices, label=left:{$yz$}] (j) at (5,1.1) {};
\node[vertices, label=right:{$xz$}] (k) at (6.1,0) {};

\foreach \to/\from in {a/b, b/c, c/a}
\draw [][-] (\to)--(\from);

\end{tikzpicture}
\caption{}
\label{f:Taylor}
\end{figure}

According to a formula of Bayer, Peeva, and Sturmfels \cite{BPS},
$b_{i,xyz}(I)$ (the $i$th multigraded Betti number of $I$ supported on
the monomial $xyz$) is the dimension of the $(i-1)$st reduced homology
module of $\taylor(I)_{<xyz}$, which is the subcomplex of
$\taylor(I)$ consisting of faces whose labels strictly divide $xyz$
(pictured on the right in Figure~\ref{f:Taylor}).

It therefore follows that $b_{1,xyz}(I)=2$, and in fact, since this is
the only possible degree three monomial in the variables $x,y,z$, we
see that $b_{1,3}(I)=2$.

The formula of Bayer, Peeva, and Sturmfels works for all multigraded
Betti numbers, but in practice it is enough to focus on the top
multidegree $\m$, that is, the product of all the variables appearing in
the generators of the ideal~(\cite[Lemma~3.1]{erey faridi}).
So the question of finding Betti numbers of $I$ supported on $\m$ is
equivalent to the problem of computing the simplicial homology of
$\taylor(I)_{< \m}$.

Suppose now we take a minimal subset
$\sigma=\{m_{a_1},\ldots,m_{a_p}\}$ of the minimal generating set of
$I$, such that $\lcm(m_{a_1},\ldots,m_{a_p})=\m$. Since $\sigma$  is
minimal with this property, we have $$\sigma \notin \taylor(I)_{<\m}
\mbox{ and } \tau \in \taylor(I)_{<\m} \mbox{ for every } \tau
\subsetneq \sigma.$$

Thus the boundary of the face $\sigma$ is a candidate for a homological
cycle in $\taylor(I)_{<\m}$, and as a result every time we have such a
$\sigma$ we might have a nonvanishing Betti number.

In our running example in Figure~\ref{f:Taylor}, we can take
$\sigma=\{xy,yz\}$ and we see that its boundary, consisting of the
vertices labeled $xy$ and $yz$, is indeed a homological $0$-cycle, and
hence $b_{1,xyz}(I)\neq 0$.

The premise of this paper is exploring which subsets $\sigma$ of
the generators of $I$ do in fact ensure a nonvanishing Betti
number. This is done by translating such subsets into ``facet covers''
of simplicial complexes. 

In this paper we introduce ``well ordered facet covers'' and show that:

\begin{itemize} 

\item For a monomial ideal $I$, the existence of a well ordered facet
  cover of cardinality $i\geq 1$ implies that $b_{i}(S/I) \neq 0$
  (Corollary~\ref{cor: nonvanishing Betti number sufficient
    condition}).

\item If $I$ is the facet ideal of a simplicial forest, then the
  existence of a well ordered facet cover of cardinality $i\geq 1$ is
  equivalent to $b_{i}(S/I) \neq 0$ (Theorem~\ref{thm: characterization of Betti numbers}).

\item If $I$ is the facet ideal of a simplicial forest, then the regularity and projective dimension 
  of $I$ can be combinatorially described in terms of well ordered facet covers (Corollary~\ref{c:reg-pd}).
\end{itemize}

Well ordered facet covers generalize the notion of strongly disjoint
bouquets introduced by Kimura \cite{kimura} and our condition
generalizes the main result in~\cite{kimura} from graphs to simplicial
complexes.  Our paper fits within the recent interest in
combinatorially bounding or computing homological invariants of
squarefree monomial ideals, see for example \cite{bouchat ha okeefe,
  erey faridi, ha regularity, ha van tuyl, ha van tuyl splitting, he
  van tuyl, katzman, kimura, kummini, mahmoudi et al, morey
  villarreal, nevo, woodroofe, zheng}.

The last section of the paper is dedicated to the study of
well ordered facet covers of graphs.

%
\section{Background}
\subsection{Simplicial complexes}

A \textbf{simplicial complex} $\Gamma$ on a finite vertex set $V(\Gamma)$ is a set of subsets of $V(\Gamma)$ such that $\{v\}\in \Gamma$ for every $v\in V(\Gamma)$ and if $F\in \Gamma$, then $G\in \Gamma$ for every $G\subseteq F$. The elements of $\Gamma$ are called \textbf{faces} and maximal faces with respect to inclusion are called \textbf{facets}. 
If $F_1,\ldots,F_q$ are all the facets of $\Gamma$, then we write $\Gamma=\langle F_1,\ldots,F_q \rangle$ and say $\Gamma$ is generated by $F_1,\ldots,F_q$. Also we write $\facets(\Gamma)$ for the set of facets of $\Gamma$. If $F$ is a facet of $\Gamma$, then $\Gamma\setminus \langle F\rangle$ denotes the simplicial complex whose facet set is $\facets(\Gamma)\setminus \{F\}$. A \textbf{subcollection} of $\Gamma$ is a simplicial complex $\Delta$ such that every facet of $\Delta$ is also a facet of $\Gamma$. If $A$ is a set of vertices of $\Gamma$, then the \textbf{induced subcollection} $\Gamma_A$ is the simplicial complex $\langle F\in \facets(\Gamma) \mid F\subseteq A \rangle $. 

A simplicial complex $\Gamma$ is \textbf{connected} if for any two facets $F$ and $G$ of $\Gamma$ there exists facets $F_0=F,F_1,\ldots,F_k=G$ of $\Gamma$ such that $F_i\cap F_{i+1}\neq \emptyset$ for every $i=0, \ldots,k-1$.

A facet $F$ of $\Gamma$ is called a \textbf{leaf} if either $F$ is the only
facet of $\Gamma$, or there exists a facet $G\in \Gamma$ such that
$G \neq F$ and $F\cap H \subseteq G$ for every facet $H\neq F$. By definition,
every leaf $F$ of $\Gamma$ contains a \textbf{free vertex}, i.e., a vertex $v$ such that $v \notin
H$ for every facet $H\in \facets(\Gamma)\setminus\{F\}$. A simplicial complex $\Gamma$ is called a \textbf{simplicial forest} if every nonempty subcollection of $\Gamma$ has a
leaf. Moreover, if $\Gamma$ is connected, then we say $\Gamma$ is a \textbf{simplicial tree}.

\begin{figure}[htp]
\centering
\begin{tikzpicture}
[scale=1.00,vertices/.style={draw, fill=black, circle, inner sep=0.5pt}]

\filldraw[fill=black!25, draw=black] (-2,1) -- (-3,-1) -- (0, -1) -- cycle; 
\filldraw[fill=black!25, draw=black] (-2,1) -- (0,-1) -- (0, 1) -- cycle; 
\filldraw[fill=black!25, draw=black] (2,1) -- (0,-1) -- (0, 1) -- cycle; 
\filldraw[fill=black!25, draw=black] (2,1) -- (0,-1) -- (3, -1) -- cycle; 

\node[vertices, label=left:{$x_1$}] (b) at (-2,1) {};
\node[vertices, label=left:{$x_2$}] (c) at (-3,-1) {};
\node[vertices, label=below:{$x_3$}] (d) at (0,-1) {};
\node[vertices, label=above:{$x_4$}] (e) at (0,1) {};
\node[label=above:{$F_1$}](k) at (0.6,0){}; 
\node[label=above:{$F_2$}](l) at (1.8,-0.8){}; 
\node[label=above:{$F_3$}](m) at (-1.8,-0.8){}; 
\node[label=above:{$F_4$}](n) at (-0.6,0.0){}; 

\node[vertices, label=right:{$x_5$}] (f) at (2,1) {};
\node[vertices, label=below:{$x_6$}] (h) at (3,-1) {};
\foreach \to/\from in {b/c, b/d, c/d, d/e, d/f, d/h, f/d, e/f, f/h}
\draw [][-] (\to)--(\from);
\end{tikzpicture}
\caption{A simplicial tree $\Gamma = \langle F_1, F_2, F_3, F_4 \rangle$ which has the facet ideal $\mathcal{F}(\Gamma)=(x_1x_2x_3, x_1x_3x_4, x_3x_4x_5, x_3x_5x_6 )$.}
\label{fig: simplicial tree}
\end{figure}
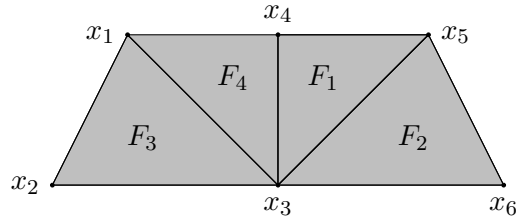

\subsection{Facet ideals}

A set of vertices $C$ of $\Gamma$ is called a \textbf{vertex cover} if
$F\cap C \neq \emptyset$ for every $F\in \facets(\Gamma)$.  A set $D
\subseteq \facets(\Gamma)$ is called a \textbf{facet cover} of
$\Gamma$ if every vertex $v$ of $\Gamma$ belongs to some $F$ in $D$. A
facet cover (respectively vertex cover) is called \textbf{minimal} if
no proper subset of it is a facet cover (respectively vertex cover) of
$\Gamma$.

A set $D$ of facets of $\Gamma$ is called a \textbf{matching} if the facets in $D$ are pairwise disjoint. Moreover, if $\facets(\Gamma_A)=D $ for $A= \displaystyle \cup_{F\in D}F $, then $D$ is called an \textbf{induced matching}. The maximum cardinality of an induced matching of $\Gamma$ is called the \textbf{induced matching number}.

Let $\Gamma$ be a simplicial complex on the vertices $x_1,\ldots, x_n$
and let $\Bbbk$ be a fixed field. The \textbf{facet ideal}  of $\Gamma$ is the ideal
$$ \mathcal{F}(\Gamma)=(x_{i_1}\cdots x_{i_k} \mid \{x_{i_1},\ldots,
x_{i_k}\} \text{ is a facet of } \Gamma)$$ of $S=\Bbbk[x_1,\ldots,x_n]$
generated by the monomials corresponding to the facets of $\Gamma$. 

If a monomial ideal $I$ is generated by the monomials $m_1,\ldots
,m_q$, then we write $I=(m_1,\ldots,m_q)$. One can set a one-to-one
correspondence between squarefree monomial ideals in
$S=\Bbbk[x_1,\ldots,x_n]$ and simplicial complexes on the vertex set
$\{x_1,\ldots,x_n\}$ via facet ideals. When it is convenient we shall
use a face $\{x_{i_1}, \ldots , x_{i_k}\}$ of a simplicial complex
$\Gamma$ interchangeably with the monomial $x_{i_1}\cdots x_{i_k}$
unless there is a confusion.

\begin{remark}[Localization of facet ideals] 
Suppose that $\Gamma$ is a simplicial complex on vertices $x_1, \ldots,
x_n$ and that $\{x_{i_1}, \ldots, x_{i_k}\}$ is a vertex cover of
$\Gamma$. Then $P=(x_{i_1}, \ldots, x_{i_k})$ is a prime containing the
ideal $\mathcal{F}(\Gamma)$, and by $\mathcal{F}(\Gamma)_P$ we mean the
ideal of $\Bbbk[x_1,\ldots ,x_n]$ whose monomial generators are
identified with those of the localized ideal of $\mathcal{F}(\Gamma)$ at $P$. We will make use of the fact that if $\Gamma$
is a simplicial forest, then $\mathcal{F}(\Gamma)_P$ is also the facet
ideal of a simplicial forest \cite{faridi cm properties}. For example,
if $\Gamma$ is the simplicial complex in Figure~\ref{fig: simplicial
  tree}, $P_1=(x_1, x_3, x_5)$ and $P_2=(x_2, x_4, x_6),$ then
$\mathcal{F}(\Gamma)_{P_1}=(x_1x_3, x_3x_5)$ and
$\mathcal{F}(\Gamma)_{P_2}=(x_2, x_4, x_6)$.
\end{remark}



\subsection{Free Resolutions}
Any monomial ideal $I$ of $S$ admits a minimal multigraded free resolution
\begin{equation}\label{eq:resolution}
\footnotesize 0 \longrightarrow \bigoplus_{\m \in \mathbb{N}^n}
 S(-\m)^{b_{r,\m}(I)} \overset{d_r}{\longrightarrow}
 \cdots\longrightarrow \bigoplus_{\m \in \mathbb{N}^n}
 S(-\m)^{b_{1,\m}(I)} \overset{d_1}{\longrightarrow}
 \bigoplus_{\m \in \mathbb{N}^n}
 S(-\m)^{b_{0,\m}(I)} \overset{d_0}{\longrightarrow} I
 \longrightarrow 0 \end{equation}
where $S(-\m)$ is the shifted free module whose
generator has multidegree $\m$ . The associated ranks
$b_{i,\m}(I)$ are called \textbf{multigraded Betti numbers} of
$I$. Similarly, the \textbf{graded Betti numbers} $b_{i,j}(I)$ are the
associated ranks of the free modules in the minimal graded free
resolution of $I$. The \textbf{projective dimension} of $I$ is
$\pd(I)=\max\{i \mid b_{i,j}(I)\neq 0 \text{ for some } j\}$ and the
\textbf{regularity} of $I$ is $\reg(I)=\max\{j-i \mid b_{i,j}(I)\neq 0
\}$.

When $S\neq I\neq 0$, the minimal free resolution of $S/I$ is the same as the one of $I$ described in~(\ref{eq:resolution}) with $I\to 0$ replaced with $S\to S/I \to 0$. As a result, we have $b_{i,\m}(S/I)=b_{i-1,\m}(I)$ for all
indices $i\geq 1$ and all monomials $\m$.

Due to polarization, resolutions of monomial ideals reduce
to resolutions of squarefree monomial ideals, allowing one to use
graphs and simplicial complexes as combinatorial tools.

It is well known that when $I$ is squarefree, the nonzero Betti
numbers lie only in squarefree multidegrees, see for example
\cite[Theorem~57.9]{peeva}. Therefore the graded Betti numbers of a
squarefree monomial ideal $I$ are given by $ b_{i,j}(I)=\sum
b_{i,\m}(I)$ where the sum is taken over all squarefree
monomials $\m$ such that $\deg(\m)=j$.

If $I$ is the facet ideal of a simplicial forest, then there are restrictions on the value and position of the multigraded Betti numbers as well.
 
\begin{theorem}[{\cite[Theorem~3.5]{erey faridi}}]\label{thm: multigraded betti numbers of simplicial forests} Let $\Gamma$ be a simplicial forest. Then multigraded Betti numbers 
of $\mathcal{F}(\Gamma)$ are either $0$ or $1$. Moreover, if for some
monomial $\m$ we have $b_{i,\m}(\mathcal{F}(\Gamma))\neq 0$, then
$b_{j,\m}(\mathcal{F}(\Gamma))= 0$ for all $j \neq i$.
\end{theorem}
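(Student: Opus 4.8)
The plan is to induct on the number $q$ of facets of $\Gamma$, exploiting the leaf structure of a simplicial forest. When $q=1$ the ideal $\mathcal{F}(\Gamma)=(m)$ is principal, so $b_{0,m}=1$ and every other multigraded Betti number vanishes, and both assertions are immediate. For $q\geq 2$, a simplicial forest has a leaf $F$; I would fix a free vertex $v$ of $F$, write $m_F$ for the monomial of $F$, and set $\Gamma'=\Gamma\setminus\langle F\rangle$, $J=\mathcal{F}(\Gamma')$ and $K=(m_F)$, so that $I:=\mathcal{F}(\Gamma)=J+K$. Since every nonempty subcollection of a forest is a forest, $\Gamma'$ is again a simplicial forest, now with $q-1$ facets, so the induction hypothesis applies to it.

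The engine is the Mayer--Vietoris short exact sequence $0\to J\cap K\to J\oplus K\to I\to 0$ and its induced long exact sequence of multigraded $\Tor(-,\Bbbk)$. First I would pin down the multidegree supports of the three pieces. Every multidegree in the resolution of $J$ is a least common multiple of generators of $J$, hence supported on $V(\Gamma')$, which does not contain $v$; so $b_{i,\m}(J)\neq 0$ forces $v\nmid\m$. The principal ideal $K$ contributes only $b_{0,m_F}(K)=1$, and here $v\mid m_F$. Finally $J\cap K=m_F\,(J:m_F)$, and because $v$ is a free vertex we have $J:m_F=J:(m_F/v)$; removing the variables of $m_F/v$ from the generators of $J$ is exactly the localization $\mathcal{F}(\Gamma')_P$, where $P$ is generated by the vertices of $\Gamma'$ lying outside $F$ (a vertex cover, since no facet of $\Gamma'$ is contained in $F$).

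Here lies the technical heart. Invoking the cited fact that a localization of the facet ideal of a forest is again the facet ideal of a forest, I would conclude $J:m_F=\mathcal{F}(\Gamma'')$ for some simplicial forest $\Gamma''$ with at most $q-1$ facets. Moreover $m_F\notin J$ (as $F$ is a facet, no other facet's monomial divides $m_F$), so the minimal generators of $J\cap K$ are \emph{proper} multiples of $m_F$; hence $b_{i,\m}(J\cap K)\neq 0$ forces $\m$ to be a strict multiple of $m_F$, in particular $v\mid\m$ and $\deg\m>\deg m_F$. Applying the induction hypothesis to $\Gamma'$ and to $\Gamma''$, the Betti numbers of $J$ and of $J\cap K$ (a shift by $m_F$ of those of $\mathcal{F}(\Gamma'')$) are each $0$ or $1$ and, for each fixed multidegree, concentrated in a single homological degree.

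Finally I would combine these. The three supports $\{v\nmid\m\}$, $\{\m=m_F\}$, and the strict multiples of $m_F$ are pairwise disjoint, so each connecting map $\Tor_i(J\cap K)_\m\to\Tor_i(J)_\m\oplus\Tor_i(K)_\m$ has either zero source or zero target and therefore vanishes. Hence the long exact sequence breaks into short exact sequences, yielding $b_{i,\m}(I)=b_{i,\m}(J)+b_{i,\m}(K)+b_{i-1,\m}(J\cap K)$. The same disjointness shows that, for a fixed $\m$, at most one of the three summands is ever nonzero; together with the inductive $0/1$ and single-degree property of each summand this gives $b_{i,\m}(I)\in\{0,1\}$ and concentration in one homological degree. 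The only genuinely delicate step is the middle one: identifying the colon ideal $J:m_F$ with a localization of $\mathcal{F}(\Gamma')$ so as to stay inside the class of forest facet ideals; once that is secured, everything else follows formally from the multidegree separation forced by the free vertex.
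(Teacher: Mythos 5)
Your proof is correct and follows essentially the same route as the paper's: the paper does not prove this statement but imports it from \cite{erey faridi}, noting that the proof there rests on the H\`{a}--Van Tuyl splitting formula, which is exactly the Betti splitting $\mathcal{F}(\Gamma)=(m_F)+\mathcal{F}(\Gamma\setminus\langle F\rangle)$ at a leaf that your Mayer--Vietoris argument rederives self-containedly (your formula $b_{i,\m}(I)=b_{i,\m}(J)+b_{i,\m}(K)+b_{i-1,\m}(J\cap K)$, with $J\cap K=m_F\,\mathcal{F}(\Gamma')_P$, specializes in top degree to the paper's recursion~\eqref{eq: recursive multigraded formula}). One terminological quibble: the maps you kill by multidegree disjointness are the induced maps $\Tor_i(J\cap K)\to\Tor_i(J)\oplus\Tor_i(K)$, not the connecting homomorphisms, but their vanishing is precisely what breaks the long exact sequence as you claim, so the argument stands.
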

The proof of the theorem above was based on the splitting formula of
H\`{a} and Van Tuyl given in \cite[Theorem~5.8]{ha van tuyl
  splitting}. When we focus on the top degree Betti numbers of a
simplicial tree $\Gamma$ (that is, the largest possible integer $q$ for which
the graded Betti number $b_{i,q}(\mathcal{F}(\Gamma))$ can be nonzero for some
$i$), this formula comes down to
\begin{equation}\label{eq: recursive multigraded formula}
b_{i,n}(\mathcal{F}(\Gamma))=b_{i-1,n-|F|}(\mathcal{F}(\Gamma\setminus \langle F \rangle)_{(x_i: \ x_i \notin F)})
\end{equation}
where $F$ is a leaf of $\Gamma$, $i\geq 1$ and $n$ is the number of
vertices of $\Gamma$. Note that by \cite[Lemma 4.5]{faridi cm
  properties}, if $\Gamma$ is a simplicial forest, then
$\mathcal{F}(\Gamma)_P$ is also the facet ideal of a simplicial
forest, making~\eqref{eq: recursive multigraded formula} a recursive
formula. We will use this equation in the sequel.

\subsubsection{The Lyubeznik Resolution}
Let $I$ be a monomial ideal of $S=\Bbbk[x_1,\ldots,x_n]$. In \cite{lyubeznik} Lyubeznik constructed an explicit free resolution
$$ \mathbf{L}:0 \rightarrow L^p \xrightarrow{d_p} L^{p-1} \xrightarrow{d_{p-1}} \cdots \xrightarrow{d_1}L^0\xrightarrow{} S/I \xrightarrow{} 0$$  of $S/I$. Although this resolution is not minimal in general, one can use it to compute the graded Betti numbers of $S/I$ since
$$b_{i,j}^S(S/I)= \dim _{\Bbbk}(\Tor_i^S(S/I, S/m)_j) =\dim _{\Bbbk}(H_i(\mathbf{L}\otimes_S S/m)_j)$$
where $m=(x_1,\ldots,x_n)$ is the irrelevant maximal ideal. The following presentation of Lyubeznik resolutions as simplicial resolutions is adopted from \cite{mermin}.
Let $M$ be the set of minimal generators of $I$. We fix a total ordering $m_1< m_2<\cdots<m_s$ on the elements of $M$. 
Let $\taylor(I)$ be the full simplex whose vertex set is $M$. We label every face $F$ of $\taylor(I)$ with
$$\lcm(F)=\lcm(m_i \mid m_i \in F) $$
the least common multiple of the vertices belonging to $F$.
  For a monomial $m\in I$, define
$$\min(m)= {\min}_{<}\{m_i\in M \mid m_i \text{ divides } m\} $$
and for a face $F\in \taylor(I)$ define
$$\min(F)=\min(\lcm(F)). $$
A face $F\in \taylor(I)$ is called \textbf{rooted} (or \textbf{$\bm{L}$-admissible}) if for every $\emptyset\neq G\subseteq F$, the property $\min(G)\in G$ holds. The rooted faces of $\taylor(I)$ form a simplicial complex $\Lambda_{I,<}$ which is called the \textbf{Lyubeznik simplicial complex} associated to $I$ and $<$. The Lyubeznik resolution of $S/I$ is the simplicial resolution supported on $\Lambda_{I,<}$. In other words, $L_i$ is the free $S$-module generated by $\{[F] : F\in \Lambda_{I,<} \text{ and } |F| = i\}$, where $[F]$ is a symbol for the generator corresponding to the face $F$. For $t_1<t_2<\cdots<t_i$
$$d_i([\{m_{t_1}, m_{t_2},\ldots,m_{t_i}\}])=\sum_{j=1}^i(-1)^{j+1}\frac{\lcm(\{m_{t_1},\ldots,m_{t_i}\})}{\lcm(\{m_{t_1},\ldots,\widehat{m_{t_j}} ,\ldots,m_{t_i}\})}[\{m_{t_1},\ldots,\widehat{m_{t_j}} ,\ldots ,m_{t_i}\}]. $$

\begin{example}\label{ex: rooted faces}
Let $\Gamma$ be the simplicial complex given in Figure ~\ref{fig: simplicial tree}. Let $F_1 < F_2 < F_3 < F_4$ be an ordering on the facets of $\Gamma$. Observe that $\min(\{F_2, F_4\})=F_1$, so $\{F_2, F_4\}$ is not a face of $\Lambda_{\mathcal{F}(\Gamma),<}$. However $\{F_1, F_2, F_3\}$ and $\{F_1, F_3, F_4\}$ are rooted and therefore they are the facets of $\Lambda_{\mathcal{F}(\Gamma),<}$. The simplicial complexes $\taylor(\mathcal{F}(\Gamma))$ and $\Lambda_{\mathcal{F}(\Gamma),<}$ are illustrated in Figures~\ref{fig: taylor simplicial complex} and~\ref{fig: lyubeznik simplicial complex} respectively.
\end{example}

\begin{figure}[ht]
\begin{minipage}[b]{0.49\linewidth}
\quad
\quad
\quad
\quad
\begin{tikzpicture}
[scale=1.5, vertices/.style={draw, fill=black, circle, inner sep=0.5pt}]
\filldraw[fill=black!25, draw=black] (2,0.5) -- (2.5,0) -- (2.9,1.5) -- cycle; 
\filldraw[fill=black!25, draw=black] (2.5,0) -- (2.9,1.5) -- (3.8,0.4) -- cycle;
\node[vertices, label=below:{$F_3$}] (a) at (2.5,0) {};
\node[vertices, label=right:{$F_2$}] (b) at (3.8,0.4) {};
\node[vertices, label=left:{$F_4$}] (c) at (2,0.5) {};
\node[vertices, label=above:{$F_1$}] (d) at (2.9,1.5) {};
\foreach \to/\from in {a/b,a/c,a/d,b/d,c/d}
\draw [-] (\to)--(\from);
\draw [dashed] (b)--(c);
\end{tikzpicture}
\caption{$\taylor(\mathcal{F}(\Gamma))$}
\label{fig: taylor simplicial complex}
\end{minipage}
\begin{minipage}[b]{0.47\linewidth}
\quad
\quad
\quad
\quad
\begin{tikzpicture}
[scale=1.5, vertices/.style={draw, fill=black, circle, inner sep=0.5pt}]
\filldraw[fill=black!25, draw=black] (2,0.5) -- (2.5,0) -- (2.9,1.5) -- cycle; 
\filldraw[fill=black!25, draw=black] (2.5,0) -- (2.9,1.5) -- (3.8,0.4) -- cycle;
\node[vertices, label=below:{$F_3$}] (a) at (2.5,0) {};
\node[vertices, label=right:{$F_2$}] (b) at (3.8,0.4) {};
\node[vertices, label=left:{$F_4$}] (c) at (2,0.5) {};
\node[vertices, label=above:{$F_1$}] (d) at (2.9,1.5) {};
\foreach \to/\from in {a/b,a/c,a/d,b/d,c/d}
\draw [-] (\to)--(\from);
\end{tikzpicture}
\caption{$\Lambda_{\mathcal{F}(\Gamma),<}$}
\label{fig: lyubeznik simplicial complex}
\end{minipage}
\end{figure}

We will make use of the following result which was noted by Barile in \cite{barile}.
\begin{theorem}[\cite{barile}]\label{l:nonvanishing condition from Lyubeznik resolution} Let $I$ be a monomial ideal and let $<$ be a total order on the set of minimal generators of $I$. If there exists a facet $F=\{m_{t_1}, m_{t_2},\ldots,m_{t_i}\}$ of $\Lambda_{I,<}$ such that
$$\lcm(m_{t_1},\ldots,\widehat{m_{t_j}},\ldots,m_{t_i})\neq \lcm(m_{t_1},\ldots ,m_{t_i}) $$
for all $j=1,2,\ldots,i$ then $b_{i,\mathbf{q}}(S/I)\neq 0$ where $\mathbf{q} =\lcm(m_{t_1},\ldots,m_{t_i})$.
\end{theorem}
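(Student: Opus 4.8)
The plan is to read the desired homology class directly off the Lyubeznik resolution $\mathbf{L}$ supported on $\Lambda_{I,<}$. Since $b_{i,\mathbf{q}}(S/I)=\dim_{\Bbbk}H_i(\mathbf{L}\otimes_S S/m)_{\mathbf{q}}$, it suffices to exhibit a nonzero class in this homology group, and the natural candidate is the symbol $[F]$ itself: it is a generator of $L_i$ in homological degree $i=|F|$ and multidegree $\lcm(F)=\mathbf{q}$, so it lands in the degree-$\mathbf{q}$ strand of the tensored complex. The whole argument then splits into showing that $[F]$ is a cycle and that it is not a boundary.

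First I would check that $[F]$ is a cycle. Tensoring with $S/m=\Bbbk$ sends every variable to $0$, so a coefficient of the form $\lcm(F)/\lcm(F\setminus\{m_{t_j}\})$ appearing in $d_i([F])$ survives in $\mathbf{L}\otimes_S S/m$ if and only if it is a constant, i.e.\ equal to $1$, equivalently $\lcm(F\setminus\{m_{t_j}\})=\lcm(F)$. The hypothesis says precisely that $\lcm(F\setminus\{m_{t_j}\})\neq\lcm(F)$ for every $j$; since $\lcm(F\setminus\{m_{t_j}\})$ always divides $\lcm(F)=\mathbf{q}$, it in fact strictly divides $\mathbf{q}$, so the corresponding coefficient is a nonconstant monomial and dies modulo $m$. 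Thus every term of $d_i([F])$ vanishes after tensoring, giving $(d_i\otimes 1)([F])=0$, so $[F]$ is a cycle.

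Next I would show $[F]$ is not a boundary. The degree-$\mathbf{q}$ boundaries are obtained by applying $d_{i+1}\otimes 1$ to $\Bbbk$-combinations of symbols $[G]$ with $G\in\Lambda_{I,<}$, $|G|=i+1$, and $\lcm(G)=\mathbf{q}$, and each $(d_{i+1}\otimes 1)([G])$ is a $\Bbbk$-linear combination of symbols $[G']$ with $G'\subsetneq G$ and $|G'|=i$. For $[F]$ to occur among these one would need $F=G'\subsetneq G$ for some face $G$ of $\Lambda_{I,<}$, contradicting the assumption that $F$ is a \emph{facet}, hence a maximal face, of $\Lambda_{I,<}$. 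Therefore the coefficient of $[F]$ in every boundary is $0$, whereas its coefficient in the cycle $[F]$ is $1$; hence $[F]$ is not a boundary and represents a nonzero class in $H_i(\mathbf{L}\otimes_S S/m)_{\mathbf{q}}$, yielding $b_{i,\mathbf{q}}(S/I)\geq 1$.

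The conceptual crux is that the two hypotheses play complementary roles: the $\lcm$ condition is exactly what forces $[F]$ to become a cycle once we pass to the residue field, while maximality of $F$ as a facet of $\Lambda_{I,<}$ is exactly what forbids $[F]$ from being a boundary. No real obstacle arises beyond keeping the multidegree bookkeeping straight; the one point deserving a line of care is the elementary observation that a monomial coefficient survives tensoring with $S/m$ precisely when it is the constant $1$, which is where the strict divisibility of each $\lcm(F\setminus\{m_{t_j}\})$ by $\mathbf{q}$ is used.
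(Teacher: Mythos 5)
Your proof is correct. The paper itself gives no argument for this statement---it is quoted from Barile \cite{barile}---and your proof is exactly the standard one underlying that citation: in the strand $(\mathbf{L}\otimes_S S/m)_{\mathbf{q}}$ the lcm hypothesis kills every term of $d_i([F])$ (each coefficient $\lcm(F)/\lcm(F\setminus\{m_{t_j}\})$ is a nonconstant monomial, hence zero mod $m$), while maximality of $F$ in $\Lambda_{I,<}$ prevents $[F]$ from appearing in the image of $d_{i+1}\otimes 1$, so $[F]$ is a nonbounding cycle and $b_{i,\mathbf{q}}(S/I)\neq 0$.
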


So, for the example above we can see that $\{F_1, F_2, F_3\}$ satisfies the assumptions of Theorem~\ref{l:nonvanishing condition from Lyubeznik resolution} and thus $b_{3,6}(S/\mathcal{F}(\Gamma))\neq 0$.

\section{Resolutions via well ordered facet covers}

\begin{definition}[Well ordered facet cover]A sequence $F_1,\ldots,F_k$ of facets of a simplicial complex $\Gamma$ is called a \textbf{well ordered facet cover} if $\{F_1,\ldots,F_k\}$ is a minimal facet cover of $\Gamma$ and for every facet $H\notin \{F_1,\ldots,F_k\}$ of $\Gamma$ there exists $i\leq k-1$ such that  $F_i \subseteq H \cup F_{i+1} \cup F_{i+2}\cup \cdots \cup F_k.$
\end{definition}

\begin{example}
Let $D$ be an induced matching of $\Gamma$. Then any ordering on the elements of $D$ forms a well ordered facet cover for the induced subcollection $\Gamma_A$ where $A= \displaystyle \cup _{\tiny{F\in D}}F$.
\end{example}

\begin{theorem}\label{thm: well ordered cover is maximal rooted}
Let $F_1,\ldots,F_i$ be a well ordered facet cover of $\Gamma$. Then there is a total order $<$ on the facets of $\Gamma$ such that $\{F_1, \ldots, F_i\}$ is a facet of the Lyubeznik simplicial complex $\Lambda_{\mathcal{F}(\Gamma),<}$.
\end{theorem}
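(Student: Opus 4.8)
The plan is to build the order so that the given cover facets form an initial segment. Concretely, I would define $<$ by
$F_1 < F_2 < \cdots < F_i$ and declare every remaining facet of $\Gamma$ to be larger than $F_i$ (ordered arbitrarily among themselves). Identifying each facet with the corresponding generator of $\mathcal{F}(\Gamma)$, a facet $F'$ divides $\lcm(G)$ exactly when $F' \subseteq \bigcup_{F \in G} F$, so $\min(G)$ is just the $<$-smallest facet contained in $\bigcup_{F \in G} F$. To show $\{F_1,\ldots,F_i\}$ is a facet of $\Lambda_{\mathcal{F}(\Gamma),<}$ I must check two things: (a) it is a rooted face, and (b) adjoining any other facet destroys rootedness, which yields maximality.

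For (a), fix a nonempty $G \subseteq \{F_1,\ldots,F_i\}$ with index set $I_G$ and set $U = \bigcup_{\ell \in I_G} F_\ell$. The key observation is that, among $F_1,\ldots,F_i$, the only ones contained in $U$ are those indexed by $I_G$. Indeed, minimality of the facet cover equips each $F_\ell$ with a private vertex $v_\ell$ lying in no other cover facet; if some $F_\ell$ with $\ell \notin I_G$ satisfied $F_\ell \subseteq U$, then $v_\ell$ would lie in some $F_m$ with $m \in I_G$ and $m \neq \ell$, contradicting privacy. Since every remaining facet exceeds $F_i$, the $<$-smallest facet contained in $U$ is the smallest-indexed member of $G$, which lies in $G$; hence $\min(G)\in G$ and $\{F_1,\ldots,F_i\}$ is rooted.

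For (b), let $H$ be any facet not among $F_1,\ldots,F_i$. The well ordered condition supplies an index $j \le i-1$ with $F_j \subseteq H \cup F_{j+1} \cup \cdots \cup F_i$. I would then test the subset $G = \{H, F_{j+1},\ldots,F_i\}$ of $\{F_1,\ldots,F_i,H\}$: its union contains $F_j$, so $F_j$ divides $\lcm(G)$, while by construction $F_j$ is strictly smaller in $<$ than $H$ and than each of $F_{j+1},\ldots,F_i$. Thus $\min(G) \le F_j$, yet every element of $G$ exceeds $F_j$, forcing $\min(G)\notin G$; so $G$ is not rooted and neither is $\{F_1,\ldots,F_i,H\}$. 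Since any rooted face properly containing $\{F_1,\ldots,F_i\}$ would contain such a set $\{F_1,\ldots,F_i,H\}$, no such face exists, giving maximality and completing the proof.

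The two hypotheses enter in exactly complementary ways: minimality of the cover is what forces rootedness of the cover itself, while the well ordered condition is what rules out every one-facet extension. I expect the main conceptual point to be recognizing that the naive initial-segment order already suffices; and I expect the most delicate step to be the rootedness argument in (a), namely verifying that no stray cover facet sneaks into $U$, since this is where the private-vertex consequence of minimality must be invoked. Step (b), by contrast, is essentially a direct transcription of the defining inequality of a well ordered facet cover.
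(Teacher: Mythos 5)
Your proof is correct and follows essentially the same route as the paper: you use the identical initial-segment order $F_1 < \cdots < F_i < \facets(\Gamma)\setminus\{F_1,\ldots,F_i\}$, your private-vertex formulation of minimality is exactly the paper's inequality $F_1\cup\cdots\cup\widehat{F_\ell}\cup\cdots\cup F_i \neq F_1\cup\cdots\cup F_i$ used to establish rootedness, and your maximality step testing $\{H,F_{j+1},\ldots,F_i\}$ against the well ordered condition is the paper's contradiction argument verbatim. The only cosmetic difference is that you spell out the hereditary nature of rootedness to reduce maximality to one-element extensions, which the paper leaves implicit.
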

\begin{proof}
First note that since $\{F_1,\ldots,F_i\}$ is a minimal facet cover of $\Gamma$, we have
\begin{equation}\label{eq:minimal facet cover minimality}
F_1\cup \cdots \cup \widehat{F_{\ell}}\cup \cdots \cup F_i \neq F_1\cup \cdots \cup F_i
\end{equation}
for all $\ell=1,\ldots,i$. Consider the order
$$F_1 < F_2 <\cdots < F_i < \facets(\Gamma)\setminus \{F_1,\ldots,F_i\}$$
on the facets of $\Gamma$ where the facets in $ \facets(\Gamma)\setminus \{F_1,\ldots,F_i\}$ have any fixed order. Observe that for any $\{F_{j_1},\ldots , F_{j_t}\} \subseteq \{F_1,\ldots,F_i\}$ with $j_1< \cdots < j_t$ we have $\min(\{F_{j_1},\ldots , F_{j_t}\})=F_{j_1}$ because of the inequality in~\eqref{eq:minimal facet cover minimality}. Therefore $\{F_1,\ldots,F_i\}$ is rooted. To see the maximality, assume for a contradiction that $\{F_1,\ldots,F_i,H\}$ is rooted for some $H \notin \{F_1,\ldots,F_i\}$. But then since $F_1,\ldots,F_i$ is a well ordered facet cover of $\Gamma$ there exists $k\leq i-1$ such that $ F_k \subseteq H \cup F_{k+1} \cup F_{k+2}\cup \cdots \cup F_i$. Thus
$\min (\{H, F_{k+1}, F_{k+2}, \ldots ,F_{i}\}) \leq F_k $ by the given order and $\min(\{H,F_{k+1},\ldots, F_i\}) \notin \{H,F_{k+1},\ldots, F_i\}$. This contradicts rootedness of $\{F_1,\ldots,F_i,H\}$. 
\end{proof}

\begin{corollary}[Betti numbers from facet covers]\label{cor: nonvanishing Betti number sufficient condition}
Let $\Gamma$ be a simplicial complex and let $\m$ be a squarefree monomial. Suppose that $\Gamma_{\m}$ has a well ordered facet cover of cardinality $i$. Then $b_{i,\m}(S/\mathcal{F}(\Gamma))\neq 0$.
\end{corollary}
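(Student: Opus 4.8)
The plan is to reduce the corollary to the two tools already developed in the excerpt, namely Theorem~\ref{thm: well ordered cover is maximal rooted} and Theorem~\ref{l:nonvanishing condition from Lyubeznik resolution}. The key observation is that the induced subcollection $\Gamma_{\m}$ is itself a simplicial complex whose facet ideal is exactly the localization/restriction of $\mathcal{F}(\Gamma)$ to the variables dividing $\m$, so that a nonvanishing Betti number of $\mathcal{F}(\Gamma_{\m})$ in its top degree transfers to a nonvanishing multigraded Betti number of $\mathcal{F}(\Gamma)$ in multidegree $\m$. First I would apply the hypothesis: $\Gamma_{\m}$ has a well ordered facet cover $F_1,\ldots,F_i$. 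By Theorem~\ref{thm: well ordered cover is maximal rooted} applied to $\Gamma_{\m}$, there is a total order $<$ on the facets of $\Gamma_{\m}$ for which $\{F_1,\ldots,F_i\}$ is a facet of the Lyubeznik complex $\Lambda_{\mathcal{F}(\Gamma_{\m}),<}$.

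Next I would verify the hypothesis of Theorem~\ref{l:nonvanishing condition from Lyubeznik resolution} for this facet. What must be checked is that dropping any single $F_j$ strictly decreases the $\lcm$, i.e.
\[
\lcm(F_1,\ldots,\widehat{F_j},\ldots,F_i)\neq \lcm(F_1,\ldots,F_i)
\]
for every $j$. But this is precisely the minimal-facet-cover condition recorded as~\eqref{eq:minimal facet cover minimality} in the proof of Theorem~\ref{thm: well ordered cover is maximal rooted}: since $\{F_1,\ldots,F_i\}$ is a \emph{minimal} facet cover, no proper subset covers all vertices of $\Gamma_{\m}$, so removing $F_j$ leaves some vertex uncovered, which means the union of the remaining facets is strictly smaller, hence their $\lcm$ is a proper divisor of $\lcm(F_1,\ldots,F_i)$. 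Thus Theorem~\ref{l:nonvanishing condition from Lyubeznik resolution} yields $b_{i,\mathbf{q}}(S/\mathcal{F}(\Gamma_{\m}))\neq 0$ where $\mathbf{q}=\lcm(F_1,\ldots,F_i)$. Because $F_1,\ldots,F_i$ is a facet cover of $\Gamma_{\m}$, every vertex of $\Gamma_{\m}$ lies in some $F_j$, so $\mathbf{q}$ is the product of all variables appearing in $\Gamma_{\m}$; since $\Gamma_{\m}=\langle F\in\facets(\Gamma)\mid F\subseteq \operatorname{supp}(\m)\rangle$, this product is exactly $\m$ (or its squarefree part). Hence $b_{i,\m}(S/\mathcal{F}(\Gamma_{\m}))\neq 0$.

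The final and most delicate step is the passage from the induced subcollection $\Gamma_{\m}$ back to $\Gamma$ itself. Here I would invoke the principle highlighted in the introduction, that for squarefree monomial ideals the multigraded Betti number in degree $\m$ depends only on the generators dividing $\m$, equivalently only on the restriction of the Taylor complex to faces with label dividing $\m$. Concretely, the generators of $\mathcal{F}(\Gamma)$ that divide $\m$ are exactly the facets $F\subseteq\operatorname{supp}(\m)$, which are the generators of $\mathcal{F}(\Gamma_{\m})$; since Betti numbers in multidegree $\m$ are computed from the subcomplex $\taylor(\mathcal{F}(\Gamma))_{\le\m}$, which coincides with $\taylor(\mathcal{F}(\Gamma_{\m}))$, we get the identity $b_{i,\m}(S/\mathcal{F}(\Gamma))=b_{i,\m}(S/\mathcal{F}(\Gamma_{\m}))$. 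Combined with the previous step this gives $b_{i,\m}(S/\mathcal{F}(\Gamma))\neq 0$, as desired.

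The main obstacle I anticipate is making the reduction in the last paragraph fully rigorous. One must argue carefully that generators of $\mathcal{F}(\Gamma)$ \emph{not} dividing $\m$ contribute nothing to the homology computing the degree-$\m$ Betti number — intuitively clear from the Bayer--Peeva--Sturmfels description via $\taylor(I)_{<\m}$, but it requires identifying $\Gamma_{\m}$'s generating set with precisely the divisors of $\m$ and confirming that no facet of $\Gamma$ outside $\operatorname{supp}(\m)$ has label dividing $\m$. Everything else is bookkeeping: the minimality inequality and the facet-cover property feed directly into the two cited theorems.
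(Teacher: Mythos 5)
Your proposal is correct and follows essentially the same route as the paper: apply Theorem~\ref{thm: well ordered cover is maximal rooted} to $\Gamma_{\m}$, use the minimality inequality~\eqref{eq:minimal facet cover minimality} to verify the lcm-dropping hypothesis of Theorem~\ref{l:nonvanishing condition from Lyubeznik resolution}, and conclude the nonvanishing Betti number in the top multidegree of $\Gamma_{\m}$. The ``delicate'' final reduction you spell out via the Taylor complex --- that $b_{i,\m}(S/\mathcal{F}(\Gamma))$ depends only on the generators dividing $\m$, i.e.\ on $\mathcal{F}(\Gamma_{\m})$ --- is exactly the restriction fact the paper uses implicitly (it is \cite[Lemma~3.1]{erey faridi}, cited in the introduction), so your write-up just makes explicit a step the paper's one-line proof leaves to the cited lemma.
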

\begin{proof}
Since any minimal facet cover satisfies~\eqref{eq:minimal facet cover minimality} the proof follows from applying Theorem~\ref{thm: well ordered cover is maximal rooted} to Theorem~\ref{l:nonvanishing condition from Lyubeznik resolution}.
\end{proof}

In Proposition~\ref{generalization verify} we will show that well
ordered edge covers of graphs correspond to strongly disjoint
bouquets. Therefore the corollary above generalizes Proposition 2.5 of
Katzman \cite{katzman} and Theorem 3.1 of Kimura \cite{kimura}.

In \cite[Corollary~3.9]{morey villarreal} Morey and Villarreal gave a
lower bound for regularity of squarefree monomial ideals (see
\eqref{eqn: morey villarreal regularity bound} in
Section~\ref{ss:reg-pd}), improving the previously known bounds by Katzman~\cite[Lemma~2.2]{katzman} and  H\`{a} and Van Tuyl~\cite[Theorem~6.5]{ha van tuyl}. Using
Corollary~\ref{cor: nonvanishing Betti number sufficient condition} we
can further improve the regularity bound mentioned above.

\begin{corollary}[Combinatorial bound for regularity]\label{cor: regularity bound} Let $\Gamma$ be a simplicial complex and let $F_1,\ldots,F_s$ be a well ordered facet cover of some induced subcollection of $\Gamma$. Then
$$\reg(S/\mathcal{F}(\Gamma))\geq \left | \bigcup_{i=1}^s F_i \right |  - s. $$
\end{corollary}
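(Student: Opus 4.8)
The plan is to reduce the statement to Corollary~\ref{cor: nonvanishing Betti number sufficient condition} by exhibiting a single multidegree in which a Betti number is forced to be nonzero, and then to read the bound off from the definition $\reg(S/\mathcal{F}(\Gamma))=\max\{j-i \mid b_{i,j}(S/\mathcal{F}(\Gamma))\neq 0\}$.

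First I would set $A=\bigcup_{i=1}^s F_i$ and let $\m=\prod_{v\in A} v$ be the corresponding squarefree monomial, so that $\Gamma_{\m}=\Gamma_A$. The key observation is that the induced subcollection $\Delta$ carrying the well ordered facet cover is in fact exactly $\Gamma_A$. Indeed, writing $\Delta=\Gamma_B$, the fact that $F_1,\ldots,F_s$ is a facet cover of $\Delta$ means it uses every vertex of $\Delta$, so $A=\bigcup_{i=1}^s F_i$ is precisely the vertex set of $\Delta$; and an induced subcollection is determined by its own vertex set. One checks the set equality directly from the definition $\Gamma_A=\langle F\in \facets(\Gamma)\mid F\subseteq A\rangle$: a facet of $\Gamma$ lies in $B$ if and only if it is a facet of $\Gamma_B$, hence is contained in $A=V(\Gamma_B)$. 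Thus $\Gamma_{\m}=\Gamma_A=\Delta$, and so $F_1,\ldots,F_s$ is a well ordered facet cover of $\Gamma_{\m}$ of cardinality $s$.

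Next I would apply Corollary~\ref{cor: nonvanishing Betti number sufficient condition} to $\Gamma_{\m}$, which yields $b_{s,\m}(S/\mathcal{F}(\Gamma))\neq 0$. Since $\m$ is squarefree of degree $|A|=\left|\bigcup_{i=1}^s F_i\right|$, this multigraded Betti number contributes to the graded Betti number in internal degree $|A|$, so $b_{s,\,|A|}(S/\mathcal{F}(\Gamma))\neq 0$. Substituting $i=s$ and $j=|A|$ into the definition of regularity then gives
$$\reg(S/\mathcal{F}(\Gamma))\geq |A|-s=\left|\bigcup_{i=1}^s F_i\right|-s,$$
as desired.

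There is no serious obstacle in this argument; the content is essentially bookkeeping. The one point that requires care — and the step I would write out most explicitly — is the identification $\Gamma_{\m}=\Delta$: without it one only knows the cover works for the given subcollection, whereas Corollary~\ref{cor: nonvanishing Betti number sufficient condition} is phrased for the specific monomial $\m$ supported on $\bigcup_{i=1}^s F_i$, and it is exactly the facet-cover property that forces the two complexes to coincide. I would also confirm that $\m$ is squarefree (being a product of distinct variables) so that the hypotheses of Corollary~\ref{cor: nonvanishing Betti number sufficient condition} are met.
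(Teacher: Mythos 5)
Your proof is correct and follows exactly the route the paper intends: the corollary is stated as an immediate consequence of Corollary~\ref{cor: nonvanishing Betti number sufficient condition}, applied to the multidegree $\m$ supported on $\bigcup_{i=1}^s F_i$, and reading off $\reg(S/\mathcal{F}(\Gamma))\geq \deg(\m)-s$ from the definition of regularity. Your explicit verification that the subcollection carrying the cover coincides with $\Gamma_{\m}$ (since a facet cover exhausts the vertex set of the induced subcollection) is exactly the bookkeeping the paper leaves implicit.
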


In the case of simplicial forests, the Betti numbers are completely characterized by the well ordered facet covers. We set to prove this next by observing how such facet covers behave under localization.

\begin{lemma}\label{prop: minimal facet cover of localization}Let
  $\Gamma$ be a simplicial complex and let $F$ be a facet of $\Gamma$
  that contains a free vertex. Let $P=(x_i: \ x_i \notin F)$ be an
  ideal of $\Bbbk[x_1,\ldots,x_n]$. Suppose that
  $\mathcal{F}(\Gamma\setminus \langle F\rangle)_{P}$ is the facet
  ideal of $\Delta$ and that $V(\Delta)=V(\Gamma)\setminus F$. Then
  the following hold.

\begin{itemize}
  \setlength{\itemsep}{0pt}

\item[$(1)$]If $\{F_1\setminus F,\ldots,F_k \setminus F \}$ is a minimal facet cover of $\Delta$, then $\{F_1,\ldots,F_k,F\}$ is a minimal facet cover of $\Gamma$.
\item[$(2)$] If $F_1\setminus F,\ldots,F_k \setminus F$ is a well ordered facet cover of $\Delta$, then $F_1,\ldots,F_k,F$ is a well ordered facet cover of $\Gamma$. 
\end{itemize}
\end{lemma}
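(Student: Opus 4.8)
The plan is to translate facet covers between $\Gamma$ and $\Delta$ through the shadow operation $H\mapsto H\setminus F$ induced by the localization. First I would record the dictionary coming from the definition of $\mathcal{F}(\Gamma\setminus\langle F\rangle)_P$: deleting from each generator the variables lying in $F$ and keeping only the minimal monomials shows that the facets of $\Delta$ are exactly the inclusion-minimal members of $\{H\setminus F : H\in\facets(\Gamma),\ H\neq F\}$, while the hypothesis $V(\Delta)=V(\Gamma)\setminus F$ says these minimal shadows still cover every vertex off $F$. Since each $F_i$ is a facet of $\Gamma$ different from $F$ (as $F_i\setminus F\neq\emptyset$), every containment among the $F_i\setminus F$ holding in $\Delta$ can be lifted to $\Gamma$ by adjoining $F$, using the elementary identity $F_i=(F_i\cap F)\cup(F_i\setminus F)\subseteq F\cup(F_i\setminus F)$.

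For part $(1)$ the covering property is immediate: because the $F_i\setminus F$ cover $V(\Delta)$, we get $\bigcup_i (F_i\setminus F)=V(\Gamma)\setminus F$, which forces $\{F_1,\ldots,F_k,F\}$ to cover $V(\Gamma)$. For minimality I would show that no facet can be dropped. The facet $F$ cannot be removed because it contains a free vertex $v$, which by definition lies in no $F_i$, so $v$ would be left uncovered. To see that no $F_j$ can be removed, I would invoke minimality of the $\Delta$-cover: discarding $F_j\setminus F$ leaves some $w\in V(\Delta)=V(\Gamma)\setminus F$ uncovered in $\Delta$, and since $w\notin F$ this same $w$ is uncovered by $\{F_1,\ldots,\widehat{F_j},\ldots,F_k,F\}$ in $\Gamma$.

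For part $(2)$, minimality of $\{F_1,\ldots,F_k,F\}$ is exactly part $(1)$, so it remains to verify the ordering condition for the sequence $F_1,\ldots,F_k,F$ (of length $k+1$). Given a facet $H\notin\{F_1,\ldots,F_k,F\}$, note $H\neq F$, so the shadow $H\setminus F$ is a generator of the localized ideal and therefore contains some facet $G$ of $\Delta$. If $G=F_j\setminus F$ for some $j\le k$, then $F_j\setminus F\subseteq H\setminus F\subseteq H$, whence $F_j\subseteq F\cup H$, and the index $i=j$ satisfies the requirement. Otherwise $G$ is a facet of $\Delta$ lying outside the well ordered cover $F_1\setminus F,\ldots,F_k\setminus F$, so that cover furnishes an index $i\le k-1$ with $F_i\setminus F\subseteq G\cup(F_{i+1}\setminus F)\cup\cdots\cup(F_k\setminus F)$; using $G\subseteq H$ and adjoining $F$ yields $F_i\subseteq H\cup F_{i+1}\cup\cdots\cup F_k\cup F$, which is the needed containment.

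The main obstacle I anticipate is that the shadow map $H\mapsto H\setminus F$ need not send a facet of $\Gamma$ to a facet of $\Delta$, since $H\setminus F$ can fail to be inclusion-minimal; one therefore cannot apply the well ordered hypothesis of $\Delta$ to $H\setminus F$ directly. The remedy is to first descend to an honest facet $G\subseteq H\setminus F$ of $\Delta$ and argue with $G$, after which the only remaining care is bookkeeping: confirming that each containment valid in $\Delta$ stays valid in $\Gamma$ once $F$ is added back, and that the produced index (either $i=j\le k$ or $i\le k-1$) is admissible for a sequence of length $k+1$.
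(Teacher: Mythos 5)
Your proof is correct and takes essentially the same approach as the paper's: for part $(1)$ the covering observation plus the free-vertex argument for $F$ and the transfer of redundancy between $\Gamma$ and $\Delta$, and for part $(2)$ descending from $H\setminus F$ to a facet $G$ of $\Delta$ and lifting the containments back to $\Gamma$ by adjoining $F$. The only difference is organizational: the paper cases on whether $H\setminus F$ is itself a facet of $\Delta$, while you case on whether $G$ belongs to the given cover, which incidentally also handles cleanly the corner case $H\setminus F=F_t\setminus F$ with $H\neq F_t$ that the paper's first case treats only implicitly.
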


\begin{proof} 
$(1)$ Assume that $\{F_1\setminus F,\ldots,F_k \setminus F \}$ is a minimal facet cover of $\Delta$. Observe that $\{F_1,\ldots,F_k,F\}$ covers $\Gamma$ since $V(\Delta)=V(\Gamma)\setminus F$. To see the minimality of $\{F_1,\ldots,F_k,F\}$, assume for a contradiction one of its elements is redundant. Note that the redundant facet cannot be $F$ since it contains a free vertex of $\Gamma$. So say $F_s$ is redundant for some $s=1,\ldots,k.$ Then we obtain $F_s\subseteq F\cup (\cup_{i\neq s}F_i)$ or equivalently $F_s\setminus F\subseteq \cup_{i\neq s}(F_i\setminus F)$ which contradicts the minimality of $\{F_1\setminus F,\ldots,F_k \setminus F \}$. 

$(2)$ Assume that $F_1\setminus F,\ldots,F_k \setminus F$ is a well ordered facet cover of $\Delta$. By part $(1)$, $\{F_1,\ldots,F_k,F\}$ is a minimal facet cover of $\Gamma$. Let $H\notin \{F_1,\ldots,F_k,F\}$ be a facet of $\Gamma$. Then we consider two cases:

\begin{case-}If $H\setminus F$ is a facet of $\Delta$, then by assumption
$$F_s\setminus F \subseteq (H\setminus F) \cup (F_{s+1}\setminus F) \cup \cdots \cup(F_{k}\setminus F)$$ for some $s\leq k-1$ and hence $F_s \subseteq H \cup F_{s+1} \cup \cdots \cup F_{k}\cup F$ as desired.
\end{case-}
\begin{case-} If $H \setminus F$ is not a facet of $\Delta$, then
  $K\setminus F \subseteq H \setminus F $ for some facet $K\setminus
  F$ of $\Delta$. Now if $K\setminus F =F_t \setminus F$ for some
  $1\leq t \leq k$, then $$F_t \setminus F \subseteq H\setminus F
  \mbox{ and } F_t \subseteq H \cup F \subseteq H \cup F_{t+1} \cup
  \cdots \cup F_{k} \cup F$$ as desired. Therefore we assume that
  $K\setminus F \neq F_t \setminus F$ for all $1\leq t \leq k$. Then
  by assumption we have $$F_{\ell}\setminus F \subseteq (K\setminus F)
  \cup (F_{\ell +1}\setminus F) \cup \cdots \cup (F_{k}\setminus F)$$
  for some $\ell\leq k-1$. Thus we get $F_{\ell} \subseteq K \cup
  F_{\ell +1} \cup \cdots \cup F_{k} \cup F \subseteq H \cup F_{\ell
    +1} \cup \cdots \cup F_{k} \cup F $ which completes the proof.
\end{case-}\end{proof}


For the class of simplicial forests, existence of well ordered facet covers characterizes Betti numbers:
\begin{theorem}[Combinatorial description for Betti numbers of simplicial forests]\label{thm: characterization of Betti numbers}
Let $\Gamma$ be a simplicial forest. Suppose that $\m$ is a monomial and $i\geq 1$. Then the following are equivalent.
\begin{itemize}
  \setlength{\itemsep}{0pt}

\item[$(1)$] $b_{i,\m}(S/\mathcal{F}(\Gamma))\neq 0$.
\item[$(2)$] $b_{i, \m}(S/\mathcal{F}(\Gamma))=1$.
\item[$(3)$] The induced subcollection $\Gamma_{\m}$ has a well ordered facet cover of cardinality $i$.
\end{itemize}
In particular, $b_{i,j}(S/\mathcal{F}(\Gamma))$ is the number of induced subcollections of $\Gamma$  which have $j$ vertices and which have well ordered facet covers of cardinality $i$. 
\end{theorem}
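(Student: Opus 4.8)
The plan is to prove the equivalence $(1)\Leftrightarrow(2)\Leftrightarrow(3)$ first and then read off the counting formula. Two of the implications are immediate: $(2)\Rightarrow(1)$ is trivial, $(1)\Rightarrow(2)$ is Theorem~\ref{thm: multigraded betti numbers of simplicial forests} (every multigraded Betti number of a simplicial forest is $0$ or $1$), and $(3)\Rightarrow(1)$ is precisely Corollary~\ref{cor: nonvanishing Betti number sufficient condition}. So the only real work is the implication $(1)\Rightarrow(3)$, and the rest of the argument is devoted to it.

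Before the main induction I would make two standard reductions. Since $\mathcal{F}(\Gamma)$ is squarefree its nonzero Betti numbers occur only in squarefree degrees, so I may assume $\m=\prod_{x\in A}x$ is squarefree. Next, a multidegree-$\m$ Betti number is computed from the generators dividing $\m$ alone, that is, from the facets contained in $A$; these are exactly the facets of $\Gamma_\m$, so $b_{i,\m}(S/\mathcal{F}(\Gamma))=b_{i,\m}(S/\mathcal{F}(\Gamma_\m))$. If this number is nonzero then $\m$ is the $\lcm$ of some facets of $\Gamma_\m$, all supported on $V(\Gamma_\m)\subseteq A$, which forces $V(\Gamma_\m)=A$. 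Replacing $\Gamma$ by $\Gamma_\m$, which is again a simplicial forest because subcollections of forests are forests, I reduce to the case where $\m=\prod_{x\in V(\Gamma)}x$ is the top-degree monomial and $n:=|V(\Gamma)|=\deg\m$.

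The heart of the argument is an induction on $n$ that feeds the recursive Betti-number formula~\eqref{eq: recursive multigraded formula} into the localization Lemma~\ref{prop: minimal facet cover of localization}. When $i=1$, nonvanishing of $b_{1,\m}(S/\mathcal{F}(\Gamma))=b_{0,\m}(\mathcal{F}(\Gamma))$ says $\m$ is a minimal generator, so $\Gamma$ is a single facet $F=V(\Gamma)$ and $\{F\}$ is a well ordered facet cover of cardinality $1$. When $i\geq2$, I pick a leaf $F$ of $\Gamma$ (which carries a free vertex) and set $P=(x_j:\ x_j\notin F)$; writing $\Delta$ for the simplicial forest with $\mathcal{F}(\Delta)=\mathcal{F}(\Gamma\setminus\langle F\rangle)_P$, one checks $V(\Delta)=V(\Gamma)\setminus F$, so $\m':=\prod_{x\in V(\Delta)}x$ is the top-degree monomial of $\Delta$ and $|V(\Delta)|=n-|F|<n$. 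Translating~\eqref{eq: recursive multigraded formula} through the identity $b_{k,\m}(S/I)=b_{k-1,\m}(I)$ yields
$$b_{i-1,\m'}(S/\mathcal{F}(\Delta))=b_{i,\m}(S/\mathcal{F}(\Gamma))\neq 0.$$
Since $i-1\geq1$, the induction hypothesis gives a well ordered facet cover $G_1,\ldots,G_{i-1}$ of $\Delta$. Each facet $G_t$ of $\Delta$ is a minimal generator of $\mathcal{F}(\Gamma\setminus\langle F\rangle)_P$, hence equals $F_t\setminus F$ for some facet $F_t\neq F$ of $\Gamma$, and distinctness of the $G_t$ makes the $F_t$ distinct and distinct from $F$. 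Lemma~\ref{prop: minimal facet cover of localization}$(2)$ then promotes $F_1\setminus F,\ldots,F_{i-1}\setminus F$ to a well ordered facet cover $F_1,\ldots,F_{i-1},F$ of $\Gamma$ of cardinality $i$, completing the induction. I expect the delicate points here to be the index bookkeeping across~\eqref{eq: recursive multigraded formula} (the shift between the resolutions of $I$ and of $S/I$) together with the verification that $V(\Delta)=V(\Gamma)\setminus F$ needed to invoke Lemma~\ref{prop: minimal facet cover of localization}; once these are in place the combinatorial lifting of the cover is automatic.

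For the final counting statement I would combine the equivalence with the squarefree formula $b_{i,j}(S/\mathcal{F}(\Gamma))=\sum_{\deg\m=j}b_{i,\m}(S/\mathcal{F}(\Gamma))$, the sum over squarefree $\m$. By the equivalence each summand is $0$ or $1$, and it equals $1$ exactly when $\Gamma_\m$ has a well ordered facet cover of cardinality $i$; as noted above such an $\m$ satisfies $V(\Gamma_\m)=\operatorname{supp}(\m)$, so $\Gamma_\m$ has exactly $j$ vertices. Since every induced subcollection $\Delta$ is recovered from its vertices via $\Gamma_{V(\Delta)}=\Delta$, the map $\m\mapsto\Gamma_\m$ is a bijection from the degree-$j$ squarefree monomials with $b_{i,\m}\neq0$ onto the induced subcollections of $\Gamma$ with $j$ vertices admitting a well ordered facet cover of cardinality $i$, which gives the stated formula for $b_{i,j}(S/\mathcal{F}(\Gamma))$.
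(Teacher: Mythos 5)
Your proposal follows the paper's proof almost step for step: the trivial implications, the reduction to the top multidegree (the paper simply cites \cite[Lemma~3.1]{erey faridi}, you rederive its content), and then an induction on the number of vertices driven by a leaf $F$, the recursion~\eqref{eq: recursive multigraded formula}, and Lemma~\ref{prop: minimal facet cover of localization}. However, there is a genuine gap: you apply~\eqref{eq: recursive multigraded formula} to an arbitrary simplicial forest, while the paper states that formula only for simplicial \emph{trees}. Precisely for this reason the paper's induction contains a separate case for disconnected $\Gamma$: using \cite[Lemma~3.2]{erey faridi} together with Theorem~\ref{thm: multigraded betti numbers of simplicial forests}, the top-degree Betti number factors as $b_{i,n}(S/\mathcal{F}(\Gamma))=b_{u_1,q_1}(S/\mathcal{F}(\Upsilon_1))\cdots b_{u_k,q_k}(S/\mathcal{F}(\Upsilon_k))$ over the connected components, the induction hypothesis is applied to each tree $\Upsilon_t$, and the component covers are concatenated into a well ordered facet cover of $\Gamma$ (the concatenation being well ordered needs a short check, since the well-ordering condition quantifies over all facets $H$ of $\Gamma$). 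As written, your induction silently assumes the recursion holds for disconnected forests; you must either prove that extension or insert the component reduction.

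A second, smaller gap is the equality $V(\Delta)=V(\Gamma)\setminus F$, which you present as something ``one checks'' and later flag as delicate without supplying an argument. It is in fact \emph{not} an unconditional check: after deleting $F$ and localizing, a generator $H\setminus F$ may be discarded because $K\setminus F\subseteq H\setminus F$ for another facet $K$, and a vertex lying only in discarded generators disappears from $\Delta$, so in general $V(\Delta)$ can be a proper subset of $V(\Gamma)\setminus F$. The paper deduces the equality from the nonvanishing hypothesis itself: $\Delta$ has at most $n-|F|$ vertices, and $b_{i-1,n-|F|}(S/\mathcal{F}(\Delta))\neq 0$ in squarefree degree $n-|F|$ forces at least $n-|F|$ vertices (\cite[Remark~2.4]{erey faridi}). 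So the hypothesis you need to invoke Lemma~\ref{prop: minimal facet cover of localization} is a consequence of the nonzero Betti number, not a routine verification. With these two repairs your argument coincides with the paper's; the remaining pieces --- the $i=1$ base case, $(1)\Leftrightarrow(2)$ via Theorem~\ref{thm: multigraded betti numbers of simplicial forests}, $(3)\Rightarrow(1)$ via Corollary~\ref{cor: nonvanishing Betti number sufficient condition}, and the counting of $b_{i,j}$ through squarefree multidegrees and the correspondence $\m\mapsto\Gamma_{\m}$ --- are sound and agree with the paper.
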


\begin{proof}
  The equivalence $(1) \iff (2)$ follows from Theorem~\ref{thm:
    multigraded betti numbers of simplicial forests}. So we only need
  to prove $(1) \iff (3)$. We may assume that $i\geq 2$ since the
  statement is trivial for $i=1.$ By \cite[Lemma 3.1]{erey faridi}, it
  suffices to prove that $b_{i, n}(S/\mathcal{F}(\Gamma))\neq 0$ if
  and only if $\Gamma$ has a well ordered facet cover of cardinality
  $i$ where $n$ is the number of vertices of $\Gamma$. 

  First observe that the implication $(3) \Longrightarrow (1)$ follows
  from Corollary~\ref{cor: nonvanishing Betti number sufficient
    condition}. So we only need to prove $(1) \Longrightarrow (3)$. 
  Suppose that $b_{i,n}(S/\mathcal{F}(\Gamma))\neq 0$. We will proceed
  by induction on the number of vertices of $\Gamma$. If $\Gamma$ is
  not connected, then we can apply \cite[Lemma 3.2]{erey faridi} to
  $\mathcal{F}(\Upsilon_1),\ldots,\mathcal{F}(\Upsilon_k)$ where
  $\Upsilon_1, \ldots , \Upsilon_k$ are the connected components of
  $\Gamma$. Then by using Theorem~\ref{thm: multigraded betti numbers
    of simplicial forests} we get $$0\neq b_{i,
    n}(S/\mathcal{F}(\Gamma))=b_{u_1,q_1}(S/\mathcal{F}(\Upsilon_1))\cdots
  b_{u_k,q_k}(S/\mathcal{F}(\Upsilon_k))$$ for some $u_1, \ldots ,u_k$
  such that $u_1+\cdots+u_k=i$, where $q_1,\ldots,q_k$ are the number
  of vertices of $\Upsilon_1,\ldots,\Upsilon_k$, respectively. But
  then by induction hypothesis for each $t=1,\ldots,k$ the simplicial
  tree $\Upsilon_t$ has a well ordered facet cover $F^t_1, \ldots
  ,F^t_{u_t}$. Hence $F^1_1, \ldots ,F^1_{u_1}, \ldots , F^k_1 ,
  \ldots , F^k_{u_k}$ is a well ordered facet cover of $\Gamma$ of
  cardinality $i$.

Now we may assume that $\Gamma$ is a simplicial tree on the vertices
$x_1,\ldots,x_n$. Let $F$ be a leaf of $\Gamma$ and
$\mathcal{F}(\Gamma\setminus \langle F \rangle)_{(x_i: \ x_i \notin
  F)}$ be the facet ideal of $\Delta$. Observe that $\Delta$ has at
most $n-|F|$ vertices. By Equation~\eqref{eq:
    recursive multigraded formula} we have $0 \neq 
b_{i,n}(S/\mathcal{F}(\Gamma))=b_{i-1,
  n-|F|}(S/\mathcal{F}(\Delta))$. Hence the nonvanishing Betti number
and \cite[Remark 2.4]{erey faridi} require $\Delta$ to have exactly
$n-|F|$ vertices. By induction hypothesis $\Delta$ has a well ordered
facet cover of cardinality $i-1$. Thus Lemma~\ref{prop: minimal facet cover
  of localization} gives that $\Gamma$ has a well ordered facet cover
of cardinality $i$ which completes the proof.
\end{proof}


\begin{example}\label{ex: computing betti numbers}
Suppose that $\Gamma$ is the simplicial tree in Figure~\ref{fig:
  simplicial tree}. We wish to apply Theorem~\ref{thm:
  characterization of Betti numbers} to find the Betti numbers of
$S/\mathcal{F}(\Gamma)$. Observe that $F_1, F_2, F_3$ is a well
ordered facet cover of $\Gamma$ since $F_1\subseteq F_2\cup F_3 \cup
F_4$. Therefore $b_{3,6}=1$. Since the multigraded Betti numbers come
from the induced subcollections we check which subcollections give
Betti numbers. We see that $\Gamma$ has two induced subcollections
which are generated by $3$ facets, namely
$\Gamma_{x_1x_2x_3x_4x_5}=\langle F_1, F_3, F_4 \rangle$ and
$\Gamma_{x_1x_3x_4x_5x_6}=\langle F_1, F_2, F_4 \rangle$. These two
simplicial complexes are isomorphic and have no well ordered facet
covers, so they do not give Betti numbers. Next, we see that $\Gamma$
has four induced subcollections which are generated by $2$ facets,
namely $\Gamma_{x_3x_4x_5x_6}=\langle F_1, F_2 \rangle$,
$\Gamma_{x_1x_3x_4x_5}=\langle F_1, F_4 \rangle$,
$\Gamma_{x_1x_2x_3x_5x_6}=\langle F_2, F_3 \rangle$ and
$\Gamma_{x_1x_2x_3x_4}=\langle F_3, F_4 \rangle$. For each of these
subcollections the facet set is the same as the unique minimal facet
cover. Therefore they all have well ordered facet covers of
cardinality $2$ and
$b_{2,x_3x_4x_5x_6}=b_{2,x_1x_3x_4x_5}=b_{2,x_1x_2x_3x_5x_6}=b_{2,x_1x_2x_3x_4}=1$. Finally
every facet of $\Gamma$ generates an induced subcollection with a well
ordered facet cover. Thus
$b_{1,x_1x_2x_3}=b_{1,x_1x_3x_4}=b_{1,x_3x_4x_5}=b_{1,x_3x_5x_6}=1$. The
Betti diagram of $S/\mathcal{F}(\Gamma)$ is
\end{example}

\begin{center}
\begin{tabular}{r| c c c c c }

 & 0 & 1 & 2 & 3 \\
 \hline
Total & 1 & 4 & 4 & 1\\
\hline
0 & 1 & -- & -- & --\\

1 & -- & -- & -- & -- \\
2 & -- & 4 & 3 & --\\
3 & -- & -- & 1 & 1\\
\end{tabular}
\end{center}
where $i$th column and $j$th row is $b_{i,i+j}(S/\mathcal{F}(\Gamma))$.

\begin{remark}
Although the Betti numbers of facet ideals of simplicial forests can be described as in Theorem~\ref{thm: characterization of Betti numbers}, it is not possible in general to minimally resolve such ideals by Lyubeznik resolution. In fact, the ideal given in Example~\ref{ex: computing betti numbers} has no simplicial resolution. To see this, assume for a contradiction that $\Theta$ supports a minimal free resolution of $S/\mathcal{F}(\Gamma)$. Looking at the multigraded Betti number in the third homological degree, we can see that either $\{F_2, F_3, F_4\}$ or $\{F_1, F_2, F_3\}$ must be a face of $\Theta$. If $\{F_2, F_3, F_4\}$ is a face, then $\{F_2, F_4\}$ is a face as well. But then $b_{2,x_1x_3x_4x_5x_6}\neq 0$ which is not true. Similarly if $\{F_1, F_2, F_3\}$ is a face, then $\{F_1, F_3\}$ is a face and we get $b_{2,x_1x_2x_3x_4x_5}\neq 0$, a contradiction.

\end{remark}

\subsection{Regularity and projective dimension}\label{ss:reg-pd}

Katzman~\cite{katzman} proved that when $G$ is a graph with edge ideal
$I(G)$ (i.e. the facet ideal of $G$ where $G$ is
considered a $1$-dimensional simplicial complex), then the
regularity of $S/I(G)$ is bounded below by the induced matching number
of $G$. Zheng \cite{zheng} was the first one who showed that such
bound is sharp if $I(G)$ is the edge ideal of a graph forest. In fact,
many interesting graph families, including chordal graphs \cite{ha van
  tuyl} and very well-covered graphs \cite{mahmoudi et al} are known to have the regularity of $S/I(G)$ equal to the
induced matching number of $G$. In \cite{morey villarreal} Morey and
Villarreal showed that if $\Gamma$ is a simplicial complex with facet
ideal $\mathcal{F}(\Gamma)$, then
\begin{equation}\label{eqn: morey villarreal regularity bound}
\reg(S/\mathcal{F}(\Gamma))\geq \max \Bigg\{ \left | \bigcup_{i=1}^s F_i \right | - s \mid \{F_1,\ldots, F_s\} \text{ is an induced matching in } \Gamma \Bigg\}
\end{equation}
which extends Katzman's bound to simplicial complexes. 

Since simplicial forests are higher dimensional analogues of graph
forests, one may expect that their facet ideals attains the bound
given above as in the case of edge ideals of graph forests. However
one can find examples of simplicial trees for which the regularity is
arbitrarily bigger than such bound, see for instance
\cite[Example~4.11]{ha regularity}.

Using well ordered facet covers we are able to improve the bound given
in~\eqref{eqn: morey villarreal regularity bound} and
express the regularity of facet ideals of simplicial forests in terms
of well ordered facet covers. Prior to our characterization no
combinatorial formula was known for the regularity of facet ideals of
simplicial forests.

\begin{corollary}[Combinatorial description for projective dimension and regularity of simplicial forests]\label{c:reg-pd} If $\Gamma$ is a simplicial forest, then
\begin{itemize}
  \setlength{\itemsep}{0pt}

\item[$(1)$] $\pd(S/\mathcal{F}(\Gamma))$ is the maximum cardinality of a well ordered facet cover of an induced subcollection of $\Gamma$.
\item[$(2)$] $\reg(S/\mathcal{F}(\Gamma))$ is equal to
\begin{equation*} \displaystyle
\max \big\{\deg(\m) - s_m \mid  F_1,\ldots, F_{s_m} \text{ is a well ordered facet cover of } \Gamma_{\m} \big\}.
\end{equation*}
\item[$(3)$] All well ordered facet covers of a simplicial forest have the same cardinality. 
\end{itemize}
\end{corollary}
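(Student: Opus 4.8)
The plan is to deduce all three parts from the combinatorial dictionary of Theorem~\ref{thm: characterization of Betti numbers}, which identifies nonvanishing of $b_{i,\m}(S/\mathcal{F}(\Gamma))$ with the existence of a well ordered facet cover of cardinality $i$ of $\Gamma_{\m}$, together with the uniqueness of homological position guaranteed by Theorem~\ref{thm: multigraded betti numbers of simplicial forests}. I will repeatedly use the last (counting) assertion of Theorem~\ref{thm: characterization of Betti numbers}: $b_{i,j}(S/\mathcal{F}(\Gamma)) \neq 0$ if and only if $\Gamma$ has an induced subcollection on $j$ vertices admitting a well ordered facet cover of cardinality $i$.

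For part $(1)$ I would simply unwind $\pd(S/\mathcal{F}(\Gamma)) = \max\{i \mid b_{i,j}(S/\mathcal{F}(\Gamma)) \neq 0 \text{ for some } j\}$. By the counting assertion, the existence of some $j$ with $b_{i,j} \neq 0$ is equivalent to the existence of an induced subcollection with a well ordered facet cover of cardinality $i$; maximizing over $i$ gives exactly the largest cardinality of such a cover.

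Part $(2)$ is the same argument carried out while tracking degrees. From $\reg(S/\mathcal{F}(\Gamma)) = \max\{j - i \mid b_{i,j} \neq 0\}$ and the counting assertion, every pair $(i,j)$ with $b_{i,j} \neq 0$ arises from an induced subcollection $\Delta$ on $j$ vertices with a well ordered facet cover of cardinality $i$, and conversely. Writing $\Delta = \Gamma_{\m}$ with $\m = \prod_{x \in V(\Delta)} x$, we have $\deg(\m) = j = |V(\Delta)|$ and $s_m = i$, so $j - i = \deg(\m) - s_m$; taking maxima on both sides yields the stated formula. Note that choosing $\m$ to be the product of the vertices of $\Delta$ forces $\deg(\m)$ to equal the number of vertices, so no degree is over-counted.

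The heart of the statement is part $(3)$, which also legitimizes writing $s_m$ for a single number in part $(2)$; I would prove it first. Let $F_1, \ldots, F_i$ and $G_1, \ldots, G_k$ be two well ordered facet covers of the forest $\Gamma$. Since each is in particular a facet cover, both cover every vertex, so $\bigcup_{\ell} F_{\ell} = V(\Gamma) = \bigcup_{\ell} G_{\ell}$, and both are well ordered facet covers of $\Gamma_{\m} = \Gamma$ for the one monomial $\m = \prod_{x \in V(\Gamma)} x$. Corollary~\ref{cor: nonvanishing Betti number sufficient condition} then gives $b_{i,\m}(S/\mathcal{F}(\Gamma)) \neq 0$ and $b_{k,\m}(S/\mathcal{F}(\Gamma)) \neq 0$ in the same multidegree $\m$, while Theorem~\ref{thm: multigraded betti numbers of simplicial forests} forbids a fixed multidegree from supporting nonzero Betti numbers in two different homological degrees; hence $i = k$. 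Because every induced subcollection of a simplicial forest is again a simplicial forest, the same reasoning applied to each $\Gamma_{\m}$ shows that $s_m$ is well defined. I expect the only genuinely load-bearing point to be the observation that a well ordered facet cover, being a facet cover, must exhaust $V(\Gamma)$ and therefore fixes the top multidegree; once this is noted, the uniqueness theorem delivers the common cardinality at once.
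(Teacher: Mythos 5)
Your proposal is correct and takes essentially the same route as the paper, whose entire proof is to cite Theorem~\ref{thm: characterization of Betti numbers} together with Theorem~\ref{thm: multigraded betti numbers of simplicial forests}: the characterization yields parts $(1)$ and $(2)$ by unwinding the definitions of $\pd$ and $\reg$, and the fact that a facet cover forces the top multidegree $\m=\prod_{x\in V(\Gamma)}x$, combined with the single-homological-degree property of Theorem~\ref{thm: multigraded betti numbers of simplicial forests}, yields part $(3)$. Your added care that $\deg(\m)$ equals the number of vertices of the induced subcollection (so no degree is over-counted) is a detail the paper leaves implicit, and your appeal to Corollary~\ref{cor: nonvanishing Betti number sufficient condition} for part $(3)$ is just the implication $(3)\Longrightarrow(1)$ of the characterization theorem in disguise.
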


\begin{proof}
Immediately follows from Theorem~\ref{thm: characterization of Betti numbers} and Theorem~\ref{thm: multigraded betti numbers of simplicial forests}.
\end{proof}

\begin{remark}
  Morey and Villarreal \cite[Corollary~3.33]{morey villarreal} proved
  that the projective dimension of sequentially Cohen-Macaulay
  squarefree monomial ideals could be characterized in terms of
  minimal vertex covers. This implies by \cite[Corollary~5.6]{faridi
    sequentially cm} that the facet ideals of simplicial forests are
  known to have a closed formula for their projective
  dimension. However, our formula in Corollary~\ref{c:reg-pd} is different since it is expressed in terms of minimal
  facet covers instead of minimal vertex covers.
\end{remark}

In \cite{bouchat ha okeefe} Bouchat, H\`{a} and A. O'Keefe studied
path ideals of rooted trees. Using the mapping cone construction they
obtained numerical formulas for the invariants of such ideals. Since
the path ideal of a rooted tree is the facet ideal of a simplicial
tree \cite[Corollary 2.9]{he van tuyl}, our results provide a new
combinatorial method to study path ideals of rooted trees. Note that
not every facet ideal of a simplicial tree is the path ideal of a
rooted tree. Therefore our approach is more general in this setting.

\section{Well ordered edge covers of graphs}
In this section we will show that well ordered edge covers of graphs correspond to certain bouquet (star) subgraphs.

Let $G$ be a finite simple graph with the vertex set $V(G)$ and the edge set $E(G)$. A \textbf{bouquet} is a graph $B$ with $V(B)=\{r,z_1,\ldots,z_d\}$ and $E(G)=\{\{r,z_i\} \mid i=1,\ldots,d \}$ where $d\geq 1$. 

We say that $H$ is a\textbf{ subgraph} of $G$ if the vertex set and edge set of $H$ are contained in those of $G$. If a bouquet $B$ is a subgraph of $G$, for simplicity we say that $B$ is a \textbf{bouquet of} $G$. Let $\bm{\mathcal{B}}=\{B_1,\ldots,B_q\}$ be a set of bouquets of $G$. We set $E(\bm{\mathcal{B}})= \cup_{i=1}^qE(B_i) $ and $V(\bm{\mathcal{B}})=\cup_{i=1}^qV(B_i)$.

\begin{definition}[{\cite[Definitions~2.1, 2.3]{kimura}}]
A set $\bm{\mathcal{B}}=\{B_1,\ldots,B_q\}$ of bouquets of $G$ is called \textbf{disjoint} in $G$ if $V(B_k)\cap V(B_{\ell})=\emptyset$ for all $k\neq \ell$. Moreover, if for every $k=1,\ldots ,q$ there exists $e_k\in E(B_k)$ such that $\{e_1,\ldots,e_q\}$ is an induced matching in $G$, then $\bm{\mathcal{B}}$ is called a \textbf{strongly disjoint} set of bouquets in $G$. We say that $G$ \textbf{contains a strongly disjoint set of bouquets} if there exists a strongly disjoint set of bouquets $\bm{\mathcal{B}}$ of $G$ such that $V(G)=V(\bm{\mathcal{B}})$.
\end{definition}
\begin{remark}\label{edge cover of graphs}
It is a well known fact in graph theory that if $D$ is a minimal edge
cover of a graph $G$, then there is a set $\bm{\mathcal{B}}$ of
disjoint bouquets in $G$ such that $E(\bm{\mathcal{B}})=D$. We refer
to~\cite[Theorem 3.1.22]{west} for a proof of this fact.
\end{remark}

\begin{proposition}\label{generalization verify}
  Let $G$ be a graph  and let
  $d_1,\ldots ,d_n \in E(G)$. Then the following are equivalent.
\begin{itemize}
\item[$(1)$] Some permutation of $d_1,\ldots ,d_n$ is a well ordered
  edge cover for $G$.
\item[$(2)$] $G$ contains a strongly disjoint set of bouquets $\bm{\mathcal{B}}$ with $E(\bm{\mathcal{B})}=\{d_1, \ldots, d_n\}$.
\end{itemize}
\end{proposition}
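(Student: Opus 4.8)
The plan is to prove both implications by passing through the bouquet decomposition of a minimal edge cover. Viewing $G$ as a $1$-dimensional simplicial complex, its facets are its edges and a facet cover is an edge cover, so a well ordered edge cover is precisely a minimal edge cover $\{d_1,\ldots,d_n\}$ equipped with an ordering satisfying the covering condition in the definition. For any minimal edge cover, Remark~\ref{edge cover of graphs} produces a set $\bm{\mathcal{B}}=\{B_1,\ldots,B_q\}$ of pairwise vertex-disjoint bouquets, say with centers $r_1,\ldots,r_q$, such that $E(\bm{\mathcal{B}})=\{d_1,\ldots,d_n\}$; since the $d_i$ cover $V(G)$ we also get $V(\bm{\mathcal{B}})=V(G)$. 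The key structural observation I would record at the outset is that each leaf vertex $z$ of a bouquet $B_k$ is a free vertex of the cover, because it lies on the unique cover edge $\{r_k,z\}$. Both implications then amount to matching the ordering data against a choice of one distinguished edge per bouquet.

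For $(1)\Rightarrow(2)$, given a well ordered edge cover $d_1,\ldots,d_n$ I would take the decomposition above and choose from each bouquet $B_k$ the edge $e_k$ occurring latest in the sequence; as the bouquets are vertex-disjoint these edges are pairwise disjoint, so the real content is that $\{e_1,\ldots,e_q\}$ is an \emph{induced} matching. I would argue by contradiction: suppose some edge $H=\{a,b\}$ of $G$ with $H\notin\{e_1,\ldots,e_q\}$ has both endpoints in $\bigcup_k V(e_k)$. If $H$ is a cover edge it lies in some $B_p$ and equals $\{r_p,z\}$ for a leaf $z$; but $z\in\bigcup_k V(e_k)$ together with vertex-disjointness forces $z$ to be the leaf on $e_p$, so $H=e_p$, a contradiction. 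If $H$ is not a cover edge, the well ordering yields $i\le n-1$ with $d_i\subseteq H\cup d_{i+1}\cup\cdots\cup d_n$; writing $d_i=\{r_t,z\}$, the free leaf $z$ cannot occur among the later edges, so $z\in H$, and as before this forces $d_i=e_t$. Then the center $r_t$ must lie in $H\cup d_{i+1}\cup\cdots\cup d_n$ but not in $H$ (else $H=e_t$), so a later edge contains $r_t$, contradicting that $e_t$ is the latest edge of $B_t$. I expect this free-vertex/latest-edge bookkeeping to be the main obstacle, and choosing $e_k$ to be the latest edge of its bouquet is exactly what drives the contradiction.

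For $(2)\Rightarrow(1)$ I would begin with strongly disjoint bouquets $\bm{\mathcal{B}}$ covering $V(G)$, with distinguished edges $e_1,\ldots,e_q$ whose union forms an induced matching, and first check that $E(\bm{\mathcal{B}})=\{d_1,\ldots,d_n\}$ is a minimal edge cover: it covers $V(G)$, and the free leaf vertex on each edge shows no edge is redundant. I would then order the cover by listing all non-distinguished bouquet edges first and the distinguished edges $e_1,\ldots,e_q$ last, and verify the covering condition. For an edge $H=\{a,b\}\notin\{d_1,\ldots,d_n\}$, the induced matching property prevents both $a$ and $b$ from lying in $\bigcup_k V(e_k)$, so at least one endpoint, say $a$, lies outside $\bigcup_k V(e_k)$ and is therefore a non-distinguished leaf of some $B_k$. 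Taking $d=\{r_k,a\}$, its companion $e_k$ occurs strictly later in the order and contributes $r_k$, while $a\in H$, giving $d\subseteq H\cup e_k$ with $d$ not last; this is precisely the required condition. Combined with the elementary permutation reduction, this completes the equivalence.
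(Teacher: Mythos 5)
Your proof is correct and follows essentially the same route as the paper's: both directions use the decomposition of a minimal edge cover into disjoint bouquets (Remark~\ref{edge cover of graphs}), choose the latest-occurring edge of each bouquet as the distinguished induced-matching edge for $(1)\Rightarrow(2)$, and for $(2)\Rightarrow(1)$ order the cover with non-distinguished edges first and verify the condition via $\{r_k,a\}\subseteq H\cup e_k$. The only differences are cosmetic: you run $(1)\Rightarrow(2)$ by contradiction where the paper argues directly that $h$ has a vertex outside all distinguished edges, and you explicitly treat the case where $H$ is a cover edge, which the paper leaves implicit.
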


\begin{proof}
  First suppose that $G$ has a well ordered edge cover $d_1, \ldots ,
  d_n$. Then by Remark~\ref{edge cover of graphs} there exists a set
  $\bm{\mathcal{T}}=\{T_1,\ldots,T_q\}$ of disjoint bouquets with
  $E(\bm{\mathcal{T}})=\{d_1, \ldots , d_n\}$. We claim that
  $\bm{\mathcal{T}}$ is strongly disjoint. Let
  $s_p=d_{\max\{i \mid d_i \in E(T_p)\}}$ for every
  $p=1,\ldots,q$. We will show that $\{s_1,\ldots,s_q\}$ is an induced
  matching. Let $h\notin \{d_1, \ldots , d_n\}$ be an edge of
  $G$. Then by definition of well ordered edge covers there is some
  $\ell \leq n-1$ for which $d_{\ell} \subseteq h\cup d_{\ell +1}\cup
  \cdots \cup d_n$. Since each edge has two vertices, we
    have $d_\ell\subseteq h\cup d_k$ for some $k>\ell$. This implies
    that $d_k\cap d_{\ell}\neq \emptyset$. Therefore $d_\ell$ and
    $d_k$ belong to the same bouquet of $\bm{\mathcal{T}}$, say
    $T_m$. Let $u$ be the vertex of $d_{\ell}$ which does not belong
    to $d_k$, so $u$ is not in any other  edge of
      $T_m$. In particular, as $k >\ell$, we have that $s_m \neq
      d_{\ell}$ and therefore $u \notin s_m$. Since $d_\ell\subseteq
      h\cup d_k$, the vertex $u$ is also in $h$. Therefore $h$
      contains a vertex, namely $u$, which does not belong to $s_p$
      for each $p=1,\ldots,q$. Then we conclude that there is no pair
      $i,j$ $(i\neq j)$ such that $h$ intersects both of $s_i$ and
      $s_j$.

  Conversely, suppose that $G$ contains a strongly disjoint set of
  bouquets $\bm{\mathcal{B}}=\{B_1,\ldots,B_q\}$ where
  $E(B_p)=\{e^p_1,\ldots,e^p_{t_p},s_p\}$ for every $p=1,\ldots,q$ and
  $\{s_1,\ldots ,s_q\}$ is an induced matching in $G$. It is clear that
  $E(\bm{\mathcal{B})}$ is a minimal edge cover of $G$. We claim that
  $$E(\bm{\mathcal{B}})\setminus \{s_1,\ldots,s_q\} , s_1, \ldots , s_q$$ is a well
  ordered edge cover of $G$ where the edges in $E(\bm{\mathcal{B}})\setminus
  \{s_1,\ldots,s_q\}$ are listed in any fixed order.

Let $h\in E(G)\setminus E(\bm{\mathcal{B})} $. Observe that since $\{s_1, \ldots , s_q\}$ is an induced matching, $h \cap (\cup_{r=1}^q s_r)$ has cardinality at most one. Therefore $h$ contains at least one vertex which do not belong to $\cup_{r=1}^q s_r$. Then there exists $p\in\{1,\ldots , q\}$ and $1\leq j \leq t_p$ such that $e_j^p\cap h\neq \emptyset$ and $e_j^p\cap h \neq e_j^p \cap s_p$. Hence $e^p_j \subseteq h \cup s_p$ as desired. 
\end{proof}


\begin{thebibliography}{9}

\setlength{\itemsep}{0pt}
\bibitem{barile} M. Barile, \emph{On ideals whose radical is a monomial ideal}, Comm. Algebra 33 (2005), 4479--4490.
\bibitem{BPS} D. Bayer, I. Peeva, and B. Sturmfels, \emph{Monomial
  resolutions}, Math. Res. Lett. 5 (1998), 31--46.
\bibitem{bouchat ha okeefe} R.R. Bouchat, H.T. H\`{a} and A. O'Keefe, \emph{Path ideals of rooted trees and their graded Betti numbers}, J. Combin. Theory Ser. A 118 (2011), no. 8, 2411--2425.
\bibitem{erey faridi} N. Erey, S. Faridi, \emph{Multigraded Betti numbers of simplicial forests} J. Pure Appl. Algebra 218 (2014), no. 10, 1800--1805. 
\bibitem{faridi sequentially cm}S. Faridi, \emph{Simplicial trees are sequentially Cohen-Macaulay}, J. Pure Appl. Algebra 190 (2004), no. 1-3, 121--136. 
\bibitem{faridi cm properties} S. Faridi, \emph{Cohen-Macaulay properties of square-free monomial ideals}, J. Combin. Theory Ser. A 109 (2005), no. 2, 299--329.
\bibitem{ha regularity} H. T. H\`{a}, \emph{Regularity of squarefree monomial ideals}, Connections between algebra, combinatorics, and geometry, in: Springer Proc. Math. Stat., vol. 76, Springer-Verlag, 2014.
\bibitem{ha van tuyl splitting}H. T. H\`{a}, A. Van Tuyl, \emph{Splittable ideals and the resolutions of monomial ideals}, Journal of Algebra 309 (2007) 405--425.
\bibitem{ha van tuyl}H. T. H\`{a}, A. Van Tuyl, \emph{Monomial ideals, edge ideals of hypergraphs, and their graded Betti numbers}, J. Algebraic Combin. 27 (2008), 215--245.
\bibitem{he van tuyl} J. He and A. Van Tuyl, \emph{Algebraic properties of the path ideal of a tree}, Comm. Algebra 38 (2010), no. 5, 1725--1742.
\bibitem{katzman} M. Katzman, \emph{Characteristic-independence of Betti numbers of graph ideals}, J. Combin. Theory Ser. A 113 (2006), no. 3, 435--454.
\bibitem{kimura}K. Kimura, \emph{Non-vanishingness of Betti numbers of edge ideals}, Harmony of Gr\"obner bases and the modern industrial society, 153--168, World Sci. Publ., Hackensack, NJ, 2012.
\bibitem{kummini} M. Kummini, \emph{Regularity, depth and arithmetic rank of bipartite edge ideals}, J. Algebraic Combin. 30 (2009), no. 4, 429--445.
\bibitem{lyubeznik} G. Lyubeznik \emph{A new explicit finite free resolution of ideals generated by monomials in an R-sequence}, J. Pure Appl. Algebra 51 (1988), no. 1-2, 193--195. 
\bibitem{mahmoudi et al}M. Mahmoudi, A. Mousivand, M. Crupi, G. Rinaldo, N. Terai, and S. Yassemi, \emph{Vertex decomposability and regularity of very well-covered graphs}, J. Pure Appl. Algebra 215 (2011), no. 10, 2473--2480.
\bibitem{mermin} J. Mermin, \emph{Three simplicial resolutions}, Progress in commutative algebra 1, 127--141, de Gruyter, Berlin, 2012.
\bibitem{morey villarreal} S. Morey, R.H. Villarreal, \emph{Edge ideals: algebraic and combinatorial properties}, Progress in commutative algebra 1, 85--126, de Gruyter, Berlin, 2012.
\bibitem{nevo} E. Nevo, \emph{Regularity of edge ideals of C4-free graphs via the topology of the lcm-lattice}, J. Combin. Theory Ser. A 118 (2011), no.2, 491--501.
\bibitem{peeva} I. Peeva, \emph{Graded Syzygies}, Algebra and Applications, vol. 14. Springer, London (2011).
\bibitem{west} D.B. West, \emph{Introduction to Graph Theory}, second ed.,
  Prentice Hall, New York, 2001.
\bibitem{woodroofe} R. Woodroofe, \emph{Matchings, coverings, and Castelnuovo-Mumford regularity}, J. Commut. Algebra 6 (2014), no. 2, 287--304. 
\bibitem{zheng} X. Zheng, \emph{Resolutions of facet ideals}, Comm. Algebra 32 (2004) 2301--2324.



\end{thebibliography}
\end{document}